\documentclass[12pt]{article}
\usepackage{graphicx}
\usepackage{latexsym}
\usepackage{epsfig}
\usepackage{psfrag}
\usepackage{amssymb}
\usepackage{times}
\usepackage{subfigure}
\usepackage{amsfonts}
\usepackage{mathrsfs}
\usepackage{enumerate}

\newtheorem {algo}{Algorithm}
\newtheorem {thm} {Theorem}[section] 
\newtheorem {lem}[thm]{Lemma}               
\newtheorem {cor}[thm]{Corollary}            
\newcommand{\qed}{\hfill $\Box$}
\newcommand{\ignore}[1]{}

\newenvironment{proof}{{\bf Proof.}}{\hfill\qed\par\bigskip}
\newcommand{\rmenum}

\date{}
\textheight=23.5cm \textwidth=15.2cm
\parskip = 0.2cm
\topmargin=-0.5cm \oddsidemargin=0.4cm \evensidemargin=0.4cm

\begin{document}
\title{A note on Matching Cover Algorithm
\thanks{Research supported by NSFC (11571323) and NSF-Henan (15IRTSTHN006).}}

\author{ {Xiumei Wang$^1$\thanks{Corresponding author: Xiumei Wang. e-mail: wangxiumei@zzu.edu.cn.}, \ Xiaoxin Song$^{2}$, \ Jinjiang Yuan$^1$
}\\
{\small $^{1}$ School of Mathematics and Statistics, Zhengzhou University,}\\
{\small Zhengzhou 450001, People's Republic of  China}\\ {\small
$^{2}$ College of Mathematics and Information Science, Henan
University,}\\
{\small  Kaifeng 475001, People's Republic of  China}}

\maketitle

\begin{abstract}

A {\it $k$-matching cover} of a graph $G$ is a union of $k$
matchings of $G$ which covers $V(G)$.  A matching cover of $G$ is {\it
optimal} if it consists of the fewest matchings of $G$. In this paper,
we present an algorithm for finding an optimal matching cover of a
graph  on $n$ vertices and $m$ edges in $O(nm)$ time. This algorithm corrects an error of Matching Cover
Algorithm in (Xiumei Wang,  Xiaoxin Song, Jinjiang Yuan, On matching cover of graphs, Math. Program. Ser. A (2014)147: 499-518).
\end{abstract}

\par\noindent{\bf Key words:} \
matching; matching cover.

\section{Introduction}

Graphs considered in this paper are finite, simple and connected.
Let $G$ be a graph. The vertex set and edge set of $G$ is denoted
by $V(G)$ and $E(G)$, respectively. Let $M$ be a subset of $E(G)$.
The subgraph of $G$ induced by $M$ is denoted by $G[M]$. Write
$$V(M)=\{v\in V(G): \mbox{ there is an } x\in V(G) \mbox{ such that
} vx\in M\}.$$ If $M=\{e\}$, we simply write $V(e)$ instead of
$V(\{e\})$. For a subset $X$ of $V(G)$, $M$ {\it covers} $X$ if $X
\subseteq V(M)$.  $M$ is  a {\it matching} of $G$ if,  for every two
distinct edges $e$ and $f$ in $M$, $V(e)\cap V(f)=\emptyset$. A
matching $M$ of $G$ is {\it perfect} if $V(M)=V(G)$. If $M$ is the union of $k$ matchings of $G$ and $M$ covers $V(G)$, then $M$ is  a {\it
$k$-matching cover} of $G$.  The {\it matching cover number} of $G$,
denoted by $mc(G)$, is the minimum number $k$ such that $G$ has a
$k$-matching cover. A $k$-matching cover is {\it optimal} if
$k=mc(G)$. In \cite{WSY2014}, Wang etc. present an algorithm, called Matching Cover
Algorithm, which finds an optimal matching cover of a graph $G$ in
$O(|V(G)|\cdot|E(G)|)$ time. But this algorithm is not correct. This paper gives a minor revised version of Matching Cover Algorithm, which has the same idea of the original algorithm.

The paper is organized as follows. In Section 2, we present  some
basic results. Section 3 is devoted to Matching Cover Algorithm.

\section{Preliminaries}

We begin with some notions and notations. Let $G$ be a graph. We use
$\Delta(G)$ to denote the maximum degree of $G$. If $G$ is a
bipartite graph with bipartition $(X,Y)$, then the graph $G$ is
denoted by $G[X,Y]$. A {\it star} is a complete bipartite graph
$G[X,Y]$ with $|X|=1$ or $|Y|=1$. If $|X|=1$, we call the only
vertex  in $X$ the {\it center} of the star, and the vertices in $Y$
its {\it ends}. If a star is $K_2$, we may refer to an arbitrary
vertex of $K_2$ as its center and the other one as its end. For a
subset $M$ of $E(G)$ so that $G[M]$ is the union of stars, the
centers of the stars in $G[M]$ are  called the {\it centers} of $M$,
and the ends of the stars in $G[M]$ are called the {\it ends} of
$M$. Note that, if $M$ is a minimal matching cover of $G$, then each
component of $G[M]$ is a star.

Let $u$ be a vertex of $G$. We use $d_G(u)$ to denote the degree
of $u$ in $G$.   $u$ is an {\it isolated vertex} if $d_G(u)=0$.
For a subset $S$ of $V(G)$, we  use $G[S]$ to denote the subgraph of $G$ induced by $S$, $G-S$
the graph obtained from $G$ by deleting all vertices in $S$, and
$N_G(S)$ the neighbor set of $S$, that is,
$$N_G(S)=\{v\in V(G)\setminus S: uv\in E(G) \mbox{ for some } u\in S \}.$$
If $S=\{v\}$, we simply write $G-v$ and $N_G(v)$ instead of
$G-\{v\}$ and $N_G(\{v\})$, respectively.


Let $M$ be a matching of $G$. An {\it $M$-augmenting path} in $G$ is
defined to be a path whose edges are alternately in $M$ and $E(G)
\setminus M$, and whose two ends are not covered by $M$. If the
graph $G$ contains an $M$-augmenting path $P$, then $M\Delta
E(P)$, the symmetric difference of $M$ and $E(P)$,  is a matching of $G$ with $|M\Delta E(P)|=|M| + 1$ and $V(M)\subset
V(M\Delta E(P))$. Berge \cite{Berge57, Bondy08, Lovasz86} in fact proved the
following.

\begin{lem}
 \label{lem:M-augmenting path} \
A matching $M$ of a graph $G$ is a maximum matching if and only if
$G$ contains no $M$-augmenting path.
\end{lem}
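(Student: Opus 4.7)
The plan is to prove the biconditional in the standard Berge-style way, handling each direction by contraposition.

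For the forward implication, the discussion immediately preceding the lemma already supplies what I need: if $G$ contained an $M$-augmenting path $P$, then $M \Delta E(P)$ would be a matching of $G$ of size $|M|+1$, contradicting the maximality of $M$. I would simply package that observation as the "only if" half.

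For the reverse implication I would argue by contrapositive. Assume $M$ is not a maximum matching, and pick a maximum matching $M^*$ with $|M^*| > |M|$. My goal is to extract an $M$-augmenting path from the symmetric difference. Consider the spanning subgraph $H$ of $G$ with edge set $M \Delta M^*$. Each vertex of $G$ is incident to at most one edge of $M$ and at most one edge of $M^*$, so in $H$ every vertex has degree at most $2$, and whenever a vertex has degree $2$ in $H$ its two incident edges come one from $M$ and one from $M^*$. Consequently every connected component of $H$ is either an isolated vertex, a path whose edges alternate between $M$ and $M^*$, or an even cycle with the same alternation.

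Since $|M^*| > |M|$, some component $C$ of $H$ must contain strictly more edges of $M^*$ than of $M$. Even cycles and paths of even length contain equally many edges from each of $M$ and $M^*$, so $C$ must be a path of odd length whose first and last edges both belong to $M^*$. The two endpoints of $C$ are then uncovered by $M$, so $C$ is an $M$-augmenting path in $G$, contradicting the assumption that none exists.

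The main obstacle is the small amount of bookkeeping in the reverse direction: one must justify carefully that $\Delta(H) \le 2$, that components are therefore paths or even cycles with alternating edges, and that the "surplus" component must be an odd alternating path whose two end-edges lie in $M^*$. None of this is deep, but each step needs an explicit justification so that the final conclusion (an $M$-augmenting path in $G$) is unambiguous.
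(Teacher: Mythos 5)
Your proposal is the standard symmetric-difference proof of Berge's theorem and is correct: the forward direction follows from the augmenting-path exchange $M\Delta E(P)$, and the reverse direction correctly extracts an odd alternating path with both end-edges in $M^*$ from a component of $M\Delta M^*$ having an $M^*$-surplus. The paper itself offers no proof of this lemma -- it is stated as a known result of Berge with citations -- so there is nothing to compare against beyond noting that your argument is the canonical one found in those references.
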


Lemma \ref{lem:M-augmenting path} implies that for every matching
$M$ of $G$, there is  a maximum matching of $G$ which covers
$V(M)$. Edmonds established an algorithm for
finding a maximum matching of a graph $G$ in $O(|V(G)|^3)$ time
\cite{Edmonds65, Korte08, Lovasz86}, which is called Edmonds'
Cardinality Matching Algorithm in \cite{Korte08}. As a by-product,
the algorithm provides a constructive proof of the Gallai-Edmonds
Structure Theorem \cite{Korte08, Lovasz86}. To present this
theorem, we first define
$$\begin{array}{l}
D(G) = \{x \in V(G): \mbox{~some maximum matching of $G$
does not cover $x$}\},\\[0.2cm]
A(G)=N_G(D(G)), \mbox{ and }\\[0.2cm]
C(G)=V(G)\setminus (A(G)\cup D(G)).\end{array}$$

\begin{lem}\label{lem:G-E-th} (Gallai-Edmonds
Structure Theorem) \  For a  graph $G$,  the subgraph $G[C(G)]$ has
a perfect matching. Furthermore, if $D(G)\neq
\emptyset$, then\\
(i) each component of $G[D(G)]$ is factor-critical,
and\\
(ii) every maximum matching of $G$ contains a near-perfect matching
of each component\\
\hspace*{0.5cm} of $G[D(G)]$, a perfect matching of $G[C(G)]$ and a
matching which matches all vertices\\
\hspace*{0.5cm}  of $A(G)$ with vertices in distinct components of
$G[D(G)]$.
\end{lem}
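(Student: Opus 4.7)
The plan is to derive the theorem from two classical auxiliary results combined with an induction on $|V(G)|$: (a) \emph{Gallai's Lemma}, that every connected graph $H$ with $D(H)=V(H)$ is factor-critical, and (b) a \emph{Stability Lemma}, that for every $v\in A(G)$ one has $D(G-v)=D(G)$, $A(G-v)=A(G)\setminus\{v\}$, $C(G-v)=C(G)$, and $\nu(G-v)=\nu(G)-1$, where $\nu$ denotes the matching number.

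I would prove Gallai's Lemma by contradiction. Suppose $H$ is connected with $D(H)=V(H)$ but not factor-critical; pick two vertices $v,w$ with $\mathrm{dist}_H(v,w)$ minimum subject to the condition that no single maximum matching of $H$ misses both of them. The distance is at least $2$; choose an internal vertex $x$ on a shortest $v$-$w$ path. By the minimality of the distance, some maximum matching $M_v$ of $H$ misses both $v$ and $x$, and some $M_w$ misses both $w$ and $x$. Analysing the component of $M_v \Delta M_w$ through $x$ via Lemma~\ref{lem:M-augmenting path} forces it to terminate at $v$ or $w$; taking the symmetric difference along that component produces a maximum matching missing both $v$ and $w$, contradicting the choice.

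For the Stability Lemma, fix $v\in A(G)$, pick $u\in D(G)\cap N_G(v)$, and let $M$ be a maximum matching missing $u$. Swapping along $vu$ and deleting $v$ exhibits a matching of $G-v$ of size $\nu(G)-1$, so $\nu(G-v)\le\nu(G)-1$; since $v\in A(G)$ is saturated by every maximum matching (else some maximum matching would miss $v$ and place $v$ into $D(G)$), equality holds. Alternating-path exchanges rooted at vertices of $D(G)$ in $G-v$, together with Lemma~\ref{lem:M-augmenting path}, then show $D(G-v)=D(G)$; the equalities for $A$ and $C$ follow from the definitions.

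The theorem is then assembled as follows. Applying the Stability Lemma repeatedly, we delete the vertices of $A(G)$ one at a time to reduce to the case $A(G)=\emptyset$, in which the $D$- and $C$-parts are unchanged; applying Gallai's Lemma to each component of $G[D(G)]$ yields~(i), while every maximum matching saturates $C(G)$ by definition of $D(G)$ and so restricts to a perfect matching of $G[C(G)]$. For~(ii), tracking which edges are used at each Stability-Lemma step recovers the required decomposition of an arbitrary maximum matching into a near-perfect matching of each component of $G[D(G)]$, a perfect matching of $G[C(G)]$, and a matching of $A(G)$ into distinct components of $G[D(G)]$; the existence of the last piece follows by a Hall-type argument against the Berge-Tutte deficiency formula. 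The main obstacle is the Stability Lemma: verifying $D(G-v)=D(G)$ rigorously requires careful alternating-path bookkeeping to rule out the creation of new exposed vertices or the merging of $D$- and $C$-parts upon deletion of $v$.
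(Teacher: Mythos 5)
The paper itself gives no proof of this lemma: it is the classical Gallai--Edmonds Structure Theorem, quoted from \cite{Korte08, Lovasz86} with the remark that Edmonds' Cardinality Matching Algorithm furnishes a constructive proof as a by-product. Your plan --- Gallai's Lemma plus a Stability Lemma for the vertices of $A(G)$, assembled by peeling off $A(G)$ one vertex at a time --- is the standard purely combinatorial route (as in Lov\'asz--Plummer) and is a legitimate alternative to the algorithmic proof the paper points to. The architecture is sound, and the computation $\nu(G-v)=\nu(G)-1$ for $v\in A(G)$ is correct. Be aware, though, that the heart of the theorem is the claim $D(G-v)=D(G)$ in the Stability Lemma, which you only promise rather than prove; as you acknowledge, that is where most of the work lives.

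The one ingredient you do spell out, the proof of Gallai's Lemma, fails as written because the extremal choice is inverted. You pick $v,w$ at minimum distance \emph{subject to the condition that no single maximum matching misses both}. But every adjacent pair satisfies that condition automatically: a maximum matching missing both endpoints of an edge would admit that edge as an augmenting path, contradicting Lemma~\ref{lem:M-augmenting path}. So as soon as $H$ has an edge, your minimum distance is $1$, there is no internal vertex $x$, and the assertion ``the distance is at least $2$'' is false. Even granting an internal $x$, your matchings $M_v$ and $M_w$ both miss $x$, so $x$ is an isolated vertex of $M_v\Delta M_w$ and there is no component ``through $x$'' to analyse. The correct setup is the opposite one: since $H$ is not factor-critical and $D(H)=V(H)$, some maximum matching misses at least two vertices; choose $v,w$ missed by a \emph{common} maximum matching $M$ with $\mathrm{dist}_H(v,w)$ minimum (now distance at least $2$ does follow from the augmenting-path argument), take $x$ internal on a shortest $v$--$w$ path, and take a maximum matching $N$ missing $x$. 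Minimality forces $N$ to cover both $v$ and $w$ and forces $M$ to cover $x$, so $x$ has degree one in $M\Delta N$; tracing that path component, it cannot be $N$-augmenting, and switching $M$ along it produces a maximum matching that misses $x$ together with at least one of $v,w$, a pair at strictly smaller distance --- the desired contradiction.
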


\noindent Here, a graph $H$ is {\it factor-critical} if $H-v$ has a
perfect matching for each $v\in V(H)$,  and a matching of $H$ is
{\it near-perfect} if it covers all but one vertex in $H$.

In fact, when Edmonds' Cardinality Matching Algorithm terminates,
besides a maximum matching of $G$, $D(G)$, $A(G)$, and $C(G)$ are
also generated (see \cite{Korte08}). If $D(G)=\emptyset$, then
$G=G[C(G)]$. In this case, $G$ has a perfect matching,  and so
$mc(G)=1$. If $D(G)\neq \emptyset$ and $A(G)=\emptyset$, since $G$
is connected, we have $G=G[D(G)]$, which is factor-critical. Then,
for a vertex $v\in V(G)$, $G-v$ has a perfect matching $M_1$. The
union of $M_1$ and an edge incident with $v$ is an optimal
matching cover of $G$, and so $mc(G)=2$.

When $A(G)\neq\emptyset$, which implies that $D(G)\neq\emptyset$,
we use $G^*$ to denote the simple graph obtained from $G$ by deleting the nontrivial  components of  $G[D(G)]$, the vertices of $C(G)$ and the edges spanned by $A(G)$. Let $D^*$ be the set of isolated vertices in $G[D(G)]$. Then $G^*$ is a bipartite graph with bipartition $(A(G),D^*)$.
The vertices of $G^*$ in $A(G)$ are called {\it $A$-vertices}, and
the  vertices of $G^*$ in $D^*$ are called {\it $D$-vertices}.   Note that $G^*$ is a subgraph of $G$, and if $G^*$ has isolated vertices, then they lie in $A(G)$.
If  $M^*$ is the union of $k$
matchings of $G^*$ and  $M^*$ covers $D^*$, we call   $M^*$  a {\it
$k$-matching $D^*$-cover} of $G^*$.  The {\it matching $D^*$-cover number} of $G^*$,
denoted by $md(G^*)$, is the minimum number $k$ such that $G^*$ has a
$k$-matching $D^*$-cover. A $k$-matching $D^*$-cover is {\it optimal} if
$k=md(G^*)$. Clearly, a matching $D^*$-cover  $M^*$ of $G^*$ is also a matching cover of $G^*[ M^*]$, which covers all but the isolated vertices of $G^*$.

\begin{lem}\label{lem:mcG*}
For each graph $G$ with $A(G)\neq\emptyset$,  if $md(G^*)\leq 1$, then $mc(G)= 2$; if $md(G^*)\geq 2$, then $mc(G)= md(G^*)$.
\end{lem}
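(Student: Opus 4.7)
The plan is to prove the lemma by establishing $mc(G)\geq 2$, $mc(G)\geq md(G^*)$, and matching upper bound constructions in each case. First, since $A(G)=N_G(D(G))\neq\emptyset$ forces $D(G)\neq\emptyset$, some maximum matching misses a vertex; because all maximum matchings of a graph share a common size, this rules out a perfect matching and yields $mc(G)\geq 2$. For $mc(G)\geq md(G^*)$, take any matching cover $M_1,\ldots,M_k$ of $G$. Every $v\in D^*$ is isolated in $G[D(G)]$ and, by the definition of $A(G)$, has no neighbor in $C(G)$; hence every $G$-edge incident to $v$ belongs to $E(G^*)$. Thus the restrictions $M_i\cap E(G^*)$ are matchings of $G^*$ whose union covers $D^*$, giving a $k$-matching $D^*$-cover.

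For the upper bounds, the common strategy is to extend matchings of $G^*$ to maximum matchings of $G$ (using the consequence of Lemma \ref{lem:M-augmenting path} that every matching $M$ is covered by some maximum matching), and to control the leftover vertices via Lemma \ref{lem:G-E-th}. When $md(G^*)\leq 1$, take an optimal $D^*$-cover $M^*$ (possibly empty) and extend it to a maximum matching $M_1$ of $G$ covering $V(M^*)$. By Gallai-Edmonds, $M_1$ covers $C(G)\cup A(G)\cup D^*$ and misses at most one vertex in each nontrivial component of $G[D(G)]$. Selecting, in each such component, one edge incident to the leftover vertex yields a matching $M_2$ (the chosen edges lie in disjoint components), and $M_1\cup M_2$ covers $V(G)$, so $mc(G)\leq 2$.

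When $md(G^*)=k\geq 2$, let $M_1^*,\ldots,M_k^*$ be an optimal $D^*$-cover. Extend $M_1^*$ to a maximum matching $M_1$ of $G$ arbitrarily, and let $U$ denote the leftover vertices of $M_1$ (one per nontrivial component of $G[D(G)]$ not matched to $A(G)$ under $M_1$). Because $V(M_2^*)\subseteq A(G)\cup D^*$ is disjoint from every nontrivial component of $G[D(G)]$, augmenting $M_2^*$ by one edge of each such component incident to its leftover vertex produces a matching $M_2^{**}$. Extend $M_2^{**}$ to a maximum matching $M_2$ of $G$ via Lemma \ref{lem:M-augmenting path}; it then covers $V(M_2^*)\cup U$. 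For $3\leq i\leq k$, extend $M_i^*$ to any maximum matching $M_i$. The union $\bigcup_i M_i$ covers $C(G)\cup A(G)$ (each $M_i$ does), covers $D^*$ (because $\bigcup_i V(M_i^*)\supseteq D^*$ and $M_i$ covers $V(M_i^*)$), and covers every nontrivial component of $G[D(G)]$ (via $M_1\cup M_2$). Hence $mc(G)\leq k=md(G^*)$.

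The main obstacle is ensuring that a single maximum matching $M_2$ can simultaneously cover both $V(M_2^*)$ and the prescribed leftover vertices of $M_1$; the key maneuver is to bundle these requirements into the enlarged matching $M_2^{**}$ \emph{before} invoking Lemma \ref{lem:M-augmenting path}. This bundling is legitimate precisely because the leftover vertices live in nontrivial components of $G[D(G)]$, which are entirely disjoint from $V(M_2^*)\subseteq A(G)\cup D^*$.
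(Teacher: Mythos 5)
Your proposal is correct and follows essentially the same route as the paper: lower bounds $mc(G)\geq 2$ and $mc(G)\geq md(G^*)$, plus an upper-bound construction that extends the first matching of an optimal $D^*$-cover to a maximum matching and absorbs the leftover vertices of the nontrivial components of $G[D(G)]$ into a disjoint matching that is merged with the second matching when $k\geq 2$ (extending $M_2^{**}$ and the later $M_i^*$ to maximum matchings is harmless but unnecessary). The one genuine divergence is in the direction $mc(G)\geq md(G^*)$: the paper first replaces each matching $M_i$ of an optimal cover by a maximum matching $M_i'$ covering $V(M_i)$ and then invokes the Gallai--Edmonds structure to restrict to $G^*$, whereas you restrict each $M_i$ to $E(G^*)$ directly, justified by the observation that every edge of $G$ incident to a vertex of $D^*$ already lies in $E(G^*)$ (such a vertex has no neighbor in $D(G)$ by isolation and none in $C(G)$ by the definition of $A(G)$); this is a slightly more elementary argument that avoids both Berge's lemma and the structure theorem for that direction.
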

\begin{proof}
Since  $A(G)\neq \emptyset$, by Lemma \ref{lem:G-E-th}, $G$ has no perfect
matching. This implies that $mc(G)\geq 2$. Suppose that
$md(G^*)=k$, and suppose that $\cup_{i=0}^k N_i$ is an optimal
matching $D^*$-cover of $G^*$,  where $N_0=\emptyset$, and  each
other $N_i$ is a matching of $G^*$ (and so a matching of $G$).
When $k=0$, that is $D^*=\emptyset$, let $N_1'$ be an arbitrary maximum matching of $G$.
When $k\geq 1$,  by Lemma \ref{lem:M-augmenting path}, let $N_1'$ be  a maximum matching of
$G$ such that $V(N_1)\subseteq V(N_1')$. By Lemma \ref{lem:G-E-th}, $N_1'$ covers $A(G)\cup C(G)$ and for each component of $G[D(G)]$, there is at most one vertex not covered by $N_1'$. Let $U$ be the set of
vertices not covered by $N_1'\cup(\cup_{i=2}^k N_i)$.  Then $U\subseteq D(G)\setminus D^*$. Write $$N'=\{e_u:
u\in U,   \mbox{$e_u$ is an edge  in $G[D(G)]$ which covers
$u$}\}.$$ Then any two edges in $N'$ lie in different components of
$G[D(G)]$. Thus $N'$ is a matching of $G$. When $k\leq 1$,
$N_1'\cup N'$ is a 2-matching cover of $G$, and so $mc(G)=2$.
When $k\geq 2$, write $N_2'=N_2\cup N'$.
Since $V(N_2)\cap V(N')=\emptyset$, $N_2'$ is a matching of $G$. Thus $N_1'\cup N_2' \cup (\cup_{i=3}^k N_i)$ is  a $k$-matching cover of $G$, and so $mc(G)\leq k= md(G^*)$.

To obtain the opposite bound,  let $\cup_{i=1}^l M_i$ be an
optimal matching cover of $G$, where $l=mc(G)$ and each $M_i$ is a
matching of $G$. By Lemma \ref{lem:M-augmenting path} again,   for
each $i$,  $1\leq i\leq l$, there exists a maximum matching $M_i'$
of $G$ such that $V(M_i)\subseteq V(M_i')$.  This implies that
$\cup_{i=1}^l M'_i$ is also a matching cover of $G$. By Lemma
\ref{lem:G-E-th},  when restrict $M'_i$ to $G^*$, we get a matching
$M_i''$ of $G^*$. Then $\cup_{i=1}^l M_i''$ is a matching $D^*$-cover of
$G^*$, and so $mc(G)= l\geq md(G^*)$. The lemma follows.
\end{proof}

The proof of Lemma \ref{lem:mcG*} implies the following.

\begin{cor}\label{cor:mcG*}
Let $G$ be a graph with $A(G)\neq \emptyset$ and $md(G^*)=k$, 
and let $\cup_{i=0}^k N_i$  be an optimal matching $D^*$-cover of $G^*$, where $N_0=\emptyset$.

 (i) If $k=0$, then $G$ has an optimal matching cover which consists of an arbitrary maximum matching of $G$ and a matching of $G[D(G)]$. 

 (ii) If $k=1$, then $G$ has an optimal matching cover which consists of a maximum matching of $G$ covering $V(N_1)$ and  a matching of $G[D(G)]$. 

 (iii) If $k\geq 2$, then $G$ has an optimal matching cover $\cup_{i=1}^k M_i$ such that $M_1$ is a maximum matching of $G$ covering $V(N_1)$, $M_2$ is a matching of $G$ which consists of $N_2$ and a matching of $G[D(G)]$, and $M_i = N_i$ for $3\leq i\leq k$. 
\end{cor}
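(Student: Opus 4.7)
The plan is to recognize the corollary as essentially a bookkeeping refinement of the construction already given in the proof of Lemma \ref{lem:mcG*}: that proof builds a matching cover of $G$ of the size the corollary claims, and the only remaining task is to verify it has the more specific structure described in (i)--(iii).

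Concretely, I would reuse the construction verbatim. Starting from the given optimal matching $D^*$-cover $\cup_{i=0}^k N_i$, I take a maximum matching $N_1'$ of $G$ --- arbitrary when $k=0$, and chosen via Lemma \ref{lem:M-augmenting path} to satisfy $V(N_1)\subseteq V(N_1')$ when $k\geq 1$. I then let $U$ be the set of vertices of $G$ not covered by $N_1'\cup(\cup_{i=2}^k N_i)$ and form $N'=\{e_u:u\in U\}$ by selecting one edge of $G[D(G)]$ through each $u\in U$.

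The structural facts I would invoke from Lemma \ref{lem:G-E-th} are that $N_1'$ covers $A(G)\cup C(G)$ and leaves at most one vertex uncovered in each component of $G[D(G)]$, which forces $U\subseteq D(G)\setminus D^*$ and places the chosen edges $e_u$ in pairwise distinct nontrivial factor-critical components of $G[D(G)]$. Hence $N'$ is a matching of $G[D(G)]$; moreover $V(N')\subseteq D(G)\setminus D^*$, because the vertices of $D^*$ are isolated in $G[D(G)]$ and so cannot be the second endpoint of any $e_u$. With this in hand, parts (i) and (ii) are immediate: $\{N_1',N'\}$ is a matching cover of $G$ of size $2=mc(G)$, with $N_1'$ a maximum matching (covering $V(N_1)$ when $k=1$) and $N'\subseteq E(G[D(G)])$. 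For (iii), I set $M_1=N_1'$, $M_2=N_2\cup N'$, and $M_i=N_i$ for $3\leq i\leq k$, and invoke $mc(G)=k$ from Lemma \ref{lem:mcG*} for optimality.

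The only point demanding genuine verification --- and the one I would flag as the main checkable step --- is that $M_2=N_2\cup N'$ is indeed a matching in case (iii). This reduces to $V(N_2)\cap V(N')=\emptyset$, which follows from $V(N_2)\subseteq A(G)\cup D^*$ (since $N_2\subseteq E(G^*)$ and $G^*$ is bipartite with parts $A(G)$ and $D^*$) together with the inclusion $V(N')\subseteq D(G)\setminus D^*$ established above. Everything else is a direct unpacking of the argument used for Lemma \ref{lem:mcG*}, so no additional ideas are required.
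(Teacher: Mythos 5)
Your proposal is correct and follows the paper's route exactly: the paper derives the corollary by observing that the construction in the proof of Lemma \ref{lem:mcG*} already produces a cover of the stated form, which is precisely what you do. Your explicit verification that $V(N_2)\cap V(N')=\emptyset$ (via $V(N_2)\subseteq A(G)\cup D^*$ and $V(N')\subseteq D(G)\setminus D^*$) is a welcome filling-in of a step the paper leaves implicit.
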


\section{Matching cover algorithm}

In this section, we are only concerned
with minimal matching covers. It is convenient, therefore, to refer to a minimal matching cover and a  minimal matching $D^*$-cover as a `matching cover' and a ` matching $D^*$-cover', respectively.

We will present an algorithm for finding an optimal matching cover of a graph. We first give an approach to finding an optimal matching $D$-cover of a bipartite graph $G[A,D]$.  Let $M_c$ be a matching $D$-cover of $G$. Then each component of $G[M_c]$ is a star. If $M_c$ is optimal, the maximum star of $G[M_c]$ is minimum among all matching $D$-covers of $G$. Therefore, the
basic idea of our approach is to suitably transfer a matching $D$-cover
$M$ of $G$ into another matching $D$-cover $M'$ of $G$ so that $G[M']$
has a smaller maximum star than $G[M]$.  We formulate this idea in
Lemma \ref{lem:Mc-switching path} after giving the following
definitions.

Let $M_c$ be a matching $D$-cover of $G$ with centers in $A$. For convenience, we call all the vertices in $A$  the centers of $M_c$. It is possible that some centers  are not covered by $M_c$. Use $G[M_c]+A$ to denote the supergraph of $G[M_c]$ obtained by adding the vertices in $A\setminus V(M_c)$ to  $G[M_c]$.    Note that for a center $v$ of $M_c$, if $v$ is covered by $M_c$, then $d_{G[M_c]+A}(v)=d_{G[M_c]}(v)$; otherwise,  $d_{G[M_c]+A}(v)=0$. A center $w$ of $M_c$ is
called a {\it maximum center} if $d_{G[M_c]+A}(w)=\Delta(G[M_c]+A)$. An
{\it $M_c$-alternating path} $P$ in $G$ is defined to be a path
which starts at a center $u$ of $M_c$ and an edge $e\in M_c$ with
$u\in V(e)$, whose vertices are alternately the centers and ends of
$M_c$, and whose edges are alternately in $M_c$ and $E(G)\setminus
M_c$. Let $v$ be the other end of $P$. We denote the path $P$ by
$P_{uv}$. The vertex $v$ might or might not be a center of $M_c$. If
$v$ is a center of $M_c$, $u$ is a maximum center of $M_c$, and
$d_{G[M_c]+A}(u)\geq d_{G[M_c]+A}(v)+2$, then $P_{uv}$ is called an {\it
$M_c$-switching path} and $u$ is called the {\it origin} of
$P_{uv}$. Clearly, the length of an $M_c$-switching path is even. An
example of an $M_c$-switching path $P_{uv}$ is displayed in Fig.1 (b), where $M_c$ is the set of edges depicted by solid lines, and
$P_{uv}$ is induced by the edges depicted by dotted lines.

Suppose that an $M_c$-switching path $P_{uv}$ is given by
$u_1y_1u_2y_2\cdots u_ky_ku_{k+1}$, where $u_1=u$ and $u_{k+1}=v$.
Then each $u_i$ is a center of $M_c$, each $y_i$ is an end of $M_c$,
and for each $i$,  $1\leq i\leq k$, $u_iy_i\in M_c$ and
$y_iu_{i+1}\in E(G)\setminus M_c$. Set $M_c'=(M_c\setminus \{u_1y_1,
u_2y_2, \cdots, u_ky_k\})\cup \{y_1u_2, y_2u_3, \cdots,
y_ku_{k+1}\}$, that is,  $M_c'=M_c \Delta E(P_{uv})$.  Then $M_c'$
is a (minimal) matching $D$-cover of $G$ with the same centers as $M_c$.
Moreover, $d_{G[M_c']}(u)=d_{G[M_c]}(u)-1$,
$d_{G[M_c']+A}(v)=d_{G[M_c]+A}(v)+1$, and $d_{G[M_c']}(w)=d_{G[M_c]}(w)$
for each other center $w$ of $M_c$. We say that $M_c'$ is the {\it
transformation} of $M_c$ with respect to $P_{uv}$ (see Fig. 1(c)).

\setlength{\unitlength}{0.08cm}
\begin{picture}(110,130)(-20,-2)
\multiput(20,120)(30,0){2}{\circle*{1.5}}
\multiput(75,120)(20,0){2}{\circle*{1.5}}
\multiput(110,120)(20,0){1}{\circle*{1.5}}
\multiput(10,100)(10,0){11}{\circle*{1.5}}
\multiput(20,120)(30,0){2}{\line(0,-1){20}}
\multiput(20,120)(30,0){2}{\line(1,-2){10}}
\multiput(20,120)(30,0){2}{\line(-1,-2){10}}
\multiput(75,120)(20,0){2}{\line(1,-4){5}}
\multiput(75,120)(20,0){2}{\line(-1,-4){5}}
\multiput(110,120)(30,0){1}{\line(0,-1){20}}
\multiput(20,120)(30,0){2}{\line(1,-2){10}}
\put(15,120){\makebox(1,0.5)[l]{\small $u$}}
\put(113,120){\makebox(1,0.5)[l]{\small $v$}}
\put(59,92){\makebox(1,0.5)[l]{\small (a)}}

\multiput(20,80)(30,0){2}{\circle{3}}
\multiput(75,80)(20,0){1}{\circle{3}}
\multiput(110,80)(20,0){1}{\circle{3}}
\multiput(20,80)(30,0){2}{\circle*{1.5}}
\multiput(75,80)(20,0){2}{\circle*{1.5}}
\multiput(110,80)(20,0){1}{\circle*{1.5}}
\multiput(10,60)(10,0){11}{\circle*{1.5}}
\multiput(20,80)(30,0){2}{\line(0,-1){20}}
\multiput(20,80)(30,0){2}{\line(1,-2){10}}
\multiput(20,80)(30,0){2}{\line(-1,-2){10}}
\multiput(75,80)(20,0){2}{\line(1,-4){5}}
\multiput(75,80)(20,0){2}{\line(-1,-4){5}}
\multiput(110,80)(30,0){1}{\line(0,-1){20}}
\multiput(20,80)(30,0){2}{\line(1,-2){10}}
\multiput(20.8,80)(0.6,-1.2){17}{\circle*{0.6}}
\multiput(30,60)(1,1){20}{\circle*{0.6}}
\multiput(50.5,80)(0,-1.2){16}{\circle*{0.6}}
\multiput(50,60)(1.25,1){20}{\circle*{0.6}}
\multiput(75.8,80)(0.3,-1.2){17}{\circle*{0.6}}
\multiput(80,60)(1.5,1){20}{\circle*{0.6}}
\put(15,80){\makebox(1,0.5)[l]{\small $u$}}
\put(113,80){\makebox(1,0.5)[l]{\small $v$}}
\put(26,73){\makebox(1,0.5)[l]{\small $e$}}
\put(33,72){\makebox(1,0.5)[l]{\small $P_{uv}$}}
\put(59,52){\makebox(1,0.5)[l]{\small (b)}}

\multiput(20,40)(30,0){2}{\circle{3}}
\multiput(75,40)(20,0){1}{\circle{3}}
\multiput(110,40)(20,0){1}{\circle{3}}
\multiput(20,40)(30,0){2}{\circle*{1.5}}
\multiput(75,40)(20,0){2}{\circle*{1.5}}
\multiput(110,40)(20,0){1}{\circle*{1.5}}
\multiput(10,20)(10,0){11}{\circle*{1.5}}
\multiput(20,40)(30,0){1}{\line(0,-1){20}}
\multiput(20,40)(30,0){2}{\line(-1,-2){10}}
\multiput(95,40)(20,0){1}{\line(1,-4){5}}
\multiput(75,40)(20,0){2}{\line(-1,-4){5}}
\multiput(110,40)(30,0){1}{\line(0,-1){20}}
\multiput(50,40)(30,0){1}{\line(1,-2){10}}
\multiput(30,20)(30,0){1}{\line(1,1){20}}
\multiput(50,20)(30,0){1}{\line(5,4){25}}
\multiput(80,20)(30,0){1}{\line(3,2){30}}
\put(15,40){\makebox(1,0.5)[l]{\small $u$}}
\put(113,40){\makebox(1,0.5)[l]{\small $v$}}
\put(59,12){\makebox(1,0.5)[l]{\small  (c)}}
\put(-25,4){\makebox(1,0.5)[l]{\small Fig. 1. (a)  $M_c$, (b) an
$M_c$-switching path $P_{uv}$, (c) the transformation of $M_c$ with
respect to $P_{uv}$.}}
\end{picture}

By Lemma  \ref{lem:mcG*} and Corollary \ref{cor:mcG*}, we may
restrict our attention to $G^*$ (as defined in Section 2), a
bipartite subgraph of a general graph $G$ with bipartition $(A(G), D^*)$, where $D^*$ is the set of isolated vertices in $G[D(G)]$. Recall that the vertices of $G^*$ in $A(G)$ are $A$-vertices and the other vertices of $G^*$ are $D$-vertices. For a subgraph $H$ of $G^*$, let $A_H$ and $D_H$ denote the sets of $A$-vertices and $D$-vertices in $H$, respectively. Then $(A_H,D_H)$ is a bipartition of $H$.

\begin{lem}\label{lem:Mc-switching path}
Let $M_c$ be a matching $D^*$-cover of $G^*$ with $A$-vertices being centers  and with as few maximum centers as possible.  Then
$M_c$ is an optimal matching $D^*$-cover of $G^*$ if and only if $G^*$
contains no $M_c$-switching path.
\end{lem}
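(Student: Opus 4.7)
I will establish the biconditional by handling each direction separately.

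For the necessity ($\Rightarrow$), I argue by contradiction. Suppose $M_c$ is optimal (with as few maximum centers as possible) and yet $G^*$ contains an $M_c$-switching path $P_{uv}$. Let $M_c' = M_c \Delta E(P_{uv})$ be its transformation. The degree identities noted just before the lemma give $d_{G[M_c']+A}(u) = d_{G[M_c]+A}(u) - 1$, $d_{G[M_c']+A}(v) = d_{G[M_c]+A}(v) + 1 \le \Delta(G[M_c]+A) - 1$, while all other center-degrees are unchanged. If $u$ is the unique maximum center of $M_c$, then $\Delta(G[M_c']+A) < \Delta(G[M_c]+A)$, so $M_c'$ decomposes into strictly fewer matchings than $M_c$, contradicting optimality. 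Otherwise $\Delta$ is unchanged but the number of maximum centers drops by exactly one (since $v$'s new degree is still at most $\Delta - 1$ and no new maximum is created elsewhere), contradicting the minimality of the number of maximum centers in $M_c$.

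For sufficiency ($\Leftarrow$), assume no $M_c$-switching path exists; the goal is to prove $\Delta(G[M_c]+A) = md(G^*)$. Set $k = \Delta(G[M_c]+A)$, fix a maximum center $u$, and run an alternating BFS in $G^*$ from $u$: from any reached center extend along $M_c$-edges to reach new ends, and from any reached end extend along non-$M_c$ edges to reach new centers. Let $R \subseteq A(G)$ and $Y \subseteq D^*$ be the $A$- and $D^*$-vertices reached. I will verify three structural claims: (i) every $G^*$-neighbor of every $y \in Y$ lies in $R$, since both the $M_c$-edge and the non-$M_c$-edges at $y$ are explored by the BFS; (ii) every $M_c$-neighbor of every $a \in R$ lies in $Y$, which yields the identity $|Y| = \sum_{a \in R} d_{G[M_c]+A}(a)$; (iii) every $a \in R$ satisfies $d_{G[M_c]+A}(a) \ge k-1$, for this holds at $u$ by definition and for any other $a \in R$ a violation would make the alternating path reaching $a$ an $M_c$-switching path.

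Combining (ii) and (iii) gives $|Y| \ge k + (|R|-1)(k-1)$. Now for an arbitrary minimal matching $D^*$-cover $M_c'$ of $G^*$ with $A$-vertices as centers (a harmless restriction since any non-minimal cover may be shrunk without increasing $\Delta$), claim (i) forces the unique $M_c'$-edge at each $y \in Y$ to land in $R$, so $\sum_{a \in R} d_{G[M_c']+A}(a) \ge |Y|$. Averaging yields some $a \in R$ with $d_{G[M_c']+A}(a) \ge |Y|/|R| \ge (k-1) + 1/|R| > k-1$, which forces $d_{G[M_c']+A}(a) \ge k$ by integrality. Hence $\Delta(G[M_c']+A) \ge k$, proving $M_c$ optimal. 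The main obstacle is this sufficiency direction, and within it the crucial point is the confinement $N_{G^*}(Y) \subseteq R$ coming from claim (i); once that is in hand, the lower bound on $|Y|$ and the pigeonhole finish are routine.
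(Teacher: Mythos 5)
Your proof is correct and follows essentially the same strategy as the paper: the necessity direction is the identical transformation argument (a switching path either lowers $\Delta$ or strictly decreases the number of maximum centers), and the sufficiency direction is the same alternating-tree-reachability-plus-counting argument, merely run directly in $G^*$ from a maximum center rather than as a contrapositive inside $G^*[M_c\cup\tilde{M_c}]$, with the degree count phrased as a pigeonhole/averaging bound instead of a derived contradiction. The confinement claim $N_{G^*}(Y)\subseteq R$ and the bound $|Y|\geq k+(|R|-1)(k-1)$ correspond exactly to the paper's $N_{G^*[M_c]}(U)\subseteq N_{G^*[\tilde{M_c}]}(U)$ and $|N_{G^*[M_c]}(U)|\geq k+(|U|-1)\tilde{k}$.
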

\begin{proof}
Suppose that $P_{uv}$ is an $M_c$-switching path in $G^*$. Then $u$
is a maximum center of $M_c$ and $d_{G^*[M_c]+A}(u)\geq
d_{G^*[M_c]+A}(v)+2$. Let $M_c'$ be the transformation of $M_c$ with
respect to $P_{uv}$. Then $M_c'$ is also a matching $D^*$-cover of $G^*$
with $A$-vertices being centers. If $M_c$ has at least two maximum
centers, then $\Delta (G^*[M_c])=\Delta (G^*[M_c'])$ and $M_c'$ has
fewer maximum centers than $M_c$, a contradiction. Thus, $u$ is the
only maximum center of $M_c$, and so, $\Delta (G^*[M_c])>\Delta
(G^*[M_c']).$ This implies that $M_c$ is not an optimal matching
cover of $G^*$.

Conversely, suppose that $M_c$ is not an optimal matching $D^*$-cover of
$G^*$, and let $\tilde{M_c}$ be an optimal matching $D^*$-cover of $G^*$.
We may suppose that $G[\tilde{M_c}]$
is the union of stars with $A$-vertices being centers, that is, in $G[\tilde{M_c}]$ all the $D$-vertices have degree one. Write
$\tilde{k}=md(G^*)$ and $k=md(G^*[M_c])$. Then
\begin{equation}
\Delta (G^*[M_c])=k > \tilde{k}=mc(G^*[\tilde{M_c}])=\Delta
(G^*[\tilde{M_c}]).
\end{equation}
Let $u$ be a maximum center of $M_c$, so that $d_{G^*[M_c]}(u)=k$.
Write $H=G^*[M_c\cup \tilde{M_c}]$. Denote by $U$ the set of all
centers of $M_c$ which are reachable from $u$ by $M_c$-alternating
paths in $H$. Then in $H$ there is no vertex in $A_{G^*}\setminus
U$ adjacent to vertices in $N_{G^*[M_c]}(U)$. So $\tilde{M_c}$ has
no edge joining a vertex in $A_{G^*}\setminus U$ to a vertex in
$N_{G^*[M_c]}(U)$. Since  $\tilde{M_c}$ is a matching $D^*$-cover of
$G^*$ with centers in $A(G)$, we have $N_{G^*[M_c]}(U)\subseteq
N_{G^*[\tilde{M_c}]}(U)$, and so,  $|N_{G^*[M_c]}(U)|\leq
|N_{G^*[\tilde{M_c}]}(U)|$. Let  $v\in U$ be such that
$d_{G^*[M_c]+A}(v)$ is as small as possible. Then
$d_{G^*[M_c]+A}(v)\leq \tilde{k}-1$, since otherwise, by (1), we
have
$$|N_{G^*[M_c]}(U)|\geq k+(|U|-1)\tilde{k}>|U|\tilde{k}\geq
|N_{G^*[\tilde{M_c}]}(U)|,$$ a contradiction. So
$d_{G^*[M_c]+A}(v)\leq \tilde{k}-1\leq k-2=d_{G^*[M_c]}(u)-2$. It
follows that the graph $H$ has an $M_c$-switching path connecting
$u$ and $v$, and so does $G^*$.
\end{proof}

Lemma \ref{lem:Mc-switching path} suggests a natural approach to
finding an optimal matching $D^*$-cover of $G^*$. We start with an
arbitrary matching $D^*$-cover $M_c$ of $G^*$ with $A$-vertices being centers, and search for an $M_c$-switching path. If such an
$M_c$-switching path is found, then the transformation of $M_c$
with respect to the path either has fewer maximum centers or else
has smaller maximum centers. Continuing in this way until we
obtain a matching $D^*$-cover $\tilde{M}_c$ so that  no
$\tilde{M}_c$-switching path exists. Then the final matching $D^*$-cover
$\tilde{M}_c$ is an optimal matching $D^*$-cover of $G^*$.

When generating a switching path, we need the notion of alternating
tree, similar to that in Hungarian Algorithm for finding a maximum
matching in a bipartite graph. Let $M_c$ be a matching $D^*$-cover of
$G^*$, and $u$ a center of $M_c$. A tree $T$ which is a subgraph of
$G^*$ is an {\it $M_c$-alternating tree} rooted at $u$ if $u\in
V(T)$ and, for each $v\in V(T)\setminus\{u\}$, the unique path in
$T$ starting at $u$ and ending at $v$ is an $M_c$-alternating path.

Let $u$ be a maximum center of $M_c$. By means of a simple
tree-search algorithm, we can find either an $M_c$-switching path
starting at $u$ or a maximal $M_c$-alternating tree rooted at $u$.
We begin with a trivial $M_c$-alternating tree rooted at $u$ which
consists of just one component of $G^*[M_c]$, a star with
center $u$. At each stage, we attempt to extend the current
$M_c$-alternating tree to a larger one. Consider an
$M_c$-alternating tree $T$ rooted at $u$ which has no
$M_c$-switching path. Recall that $A_T$ consists of some centers
of $M_c$, $D_T$ consists of some ends of $M_c$, and $(A_T, D_T)$
is a bipartition of $T$ with $u\in A_T$. If there is an edge
$xy\in E(G^*)$ with $x\in D_T$ and $y\in A_{G^*}\setminus A_T$,
then we grow $T$ into a larger $M_c$-alternating tree by adding
the edge $xy$ and $S_y$, where $S_y$ is a component of
$G^*[M_c]+A$ containing $y$ (see Fig. 2(b)). In the case that
$d_{G^*[M_c]+A}(y)\leq d_{G^*[M_c]+A}(u)-2$, we find an
$M_c$-switching path (see Fig. 2(c)). If  no such edge $xy\in
E(G^*)$ with $x\in D_T$ and $y\in A_{G^*}\setminus A_T$ exists,
$T$ is  a maximal $M_c$-alternating tree rooted at $u$.

\setlength{\unitlength}{0.08cm}
\begin{picture}(110,73)(-5,-6)
\multiput(30,60)(30,0){1}{\circle*{1.4}}
\multiput(18,50)(8,0){4}{\circle*{1.4}}
\multiput(18,40)(24,0){2}{\circle*{1.4}}
\multiput(8,30)(10,0){3}{\circle*{1.4}}
\multiput(37,30)(10,0){2}{\circle*{1.4}}
\multiput(30,60)(30,0){1}{\line(-6,-5){12}}
\multiput(30,60)(30,0){1}{\line(6,-5){12}}
\multiput(30,60)(30,0){1}{\line(2,-5){4}}
\multiput(30,60)(30,0){1}{\line(-2,-5){4}}
\multiput(18,50)(0,-2.6){4}{\line(0,-1){1.5}}
\multiput(42,50)(0,-2.6){4}{\line(0,-1){1.5}}
\multiput(18,40)(30,0){1}{\line(-1,-1){10}}
\multiput(18,40)(30,0){1}{\line(0,-1){10}}
\multiput(18,40)(30,0){1}{\line(1,-1){10}}
\multiput(42,40)(10,0){1}{\line(0,-1){10}}%
\multiput(42,30)(24,0){1}{\circle*{1.4}}%
\multiput(42,40)(10,0){1}{\line(1,-2){5}}
\multiput(42,40)(10,0){1}{\line(-1,-2){5}}
\put(27,62){\makebox(1,0.5)[l]{\small $u$}}

\multiput(80,60)(30,0){1}{\circle*{1.4}}
\multiput(68,50)(8,0){4}{\circle*{1.4}}
\multiput(68,40)(24,0){2}{\circle*{1.4}}
\multiput(58,30)(10,0){3}{\circle*{1.4}}
\multiput(87,30)(10,0){2}{\circle*{1.4}}
\multiput(92,40)(10,0){1}{\line(0,-1){10}}%
\multiput(92,30)(24,0){1}{\circle*{1.4}}%
\multiput(78,20)(10,0){1}{\circle*{1.4}}
\multiput(73,10)(10,0){2}{\circle*{1.4}}
\multiput(78,20)(30,0){1}{\line(-1,-2){5}}
\multiput(78,20)(30,0){1}{\line(1,-2){5}}
\multiput(80,60)(30,0){1}{\line(-6,-5){12}}
\multiput(80,60)(30,0){1}{\line(6,-5){12}}
\multiput(80,60)(30,0){1}{\line(2,-5){4}}
\multiput(80,60)(30,0){1}{\line(-2,-5){4}}
\multiput(68,50)(0,-2.6){4}{\line(0,-1){1.5}}
\multiput(92,50)(0,-2.6){4}{\line(0,-1){1.5}}
\multiput(68,40)(30,0){1}{\line(-1,-1){10}}
\multiput(68,40)(30,0){1}{\line(0,-1){10}}
\multiput(68,40)(30,0){1}{\line(1,-1){10}}
\multiput(92,40)(10,0){1}{\line(1,-2){5}}
\multiput(92,40)(10,0){1}{\line(-1,-2){5}}
\multiput(78,30)(0,-2.6){4}{\line(0,-1){1.5}}
\put(77,62){\makebox(1,0.5)[l]{\small $u$}}
\put(80,31){\makebox(1,0.5)[l]{\small $x$}}
\put(80,21){\makebox(1,0.5)[l]{\small $y$}}

\multiput(130,60)(30,0){1}{\circle*{1.4}}
\multiput(118,50)(8,0){4}{\circle*{1.4}}
\multiput(118,40)(24,0){2}{\circle*{1.4}}
\multiput(108,30)(10,0){3}{\circle*{1.4}}
\multiput(137,30)(10,0){2}{\circle*{1.4}}
\multiput(142,40)(10,0){1}{\line(0,-1){10}}%
\multiput(142,30)(24,0){1}{\circle*{1.4}}%
\multiput(128,20)(10,0){1}{\circle*{1.4}}
\multiput(123,10)(10,0){2}{\circle*{1.4}}
\multiput(130,60)(30,0){1}{\line(-6,-5){12}}
\multiput(130,60)(30,0){1}{\line(6,-5){12}}
\multiput(130,60)(30,0){1}{\line(2,-5){4}}
\multiput(130,60)(30,0){1}{\line(-2,-5){4}}
\multiput(118,50)(0,-2.6){4}{\line(0,-1){1.5}}
\multiput(142,50)(0,-2.6){4}{\line(0,-1){1.5}}
\multiput(118,40)(30,0){1}{\line(-1,-1){10}}
\multiput(118,40)(30,0){1}{\line(0,-1){10}}
\multiput(118,40)(30,0){1}{\line(1,-1){10}}
\multiput(142,40)(10,0){1}{\line(1,-2){5}}
\multiput(142,40)(10,0){1}{\line(-1,-2){5}}
\multiput(128,30)(0,-2.6){4}{\line(0,-1){1.5}}
\multiput(128,20)(30,0){1}{\line(-1,-2){5}}
\multiput(128,20)(30,0){1}{\line(1,-2){5}}
\multiput(129,60)(-1.2,-1){10}{\circle*{0.6}}
\multiput(117.5,50)(0,-1.2){9}{\circle*{0.6}}
\multiput(117.2,40)(1,-1){10}{\circle*{0.6}}
\multiput(127.5,30)(0,-1.2){9}{\circle*{0.6}}
\put(127,62){\makebox(1,0.5)[l]{\small $u$}}
\put(130,21){\makebox(1,0.5)[l]{\small $v$}}
\put(27,3){\makebox(1,0.5)[l]{\small (a) }}
\put(77,3){\makebox(1,0.5)[l]{\small (b) }}
\put(127,3){\makebox(1,0.5)[l]{\small (c) }}
\put(-5,-5){\makebox(1,0.5)[l]{\small Fig. 2. (a) An
$M_c$-alternating tree, (b) a larger $M_c$-alternating tree, (c) an
$M_c$-switching path.}}
\end{picture}

In order to decrease the iteration times of the algorithm, we
first construct a near-maximal $M_c$-alternating forest. Here a
{\it near-maximal $M_c$-alternating forest} $F$ is a union of
$M_c$-alternating trees each of which roots at a maximum center of
$M_c$, say $F=\cup_{i=1}^t T_i$, such that all maximum centers of
$M_c$ are in $F$ and for each $i$, $1\leq i\leq t$, $T_i$ is
maximal in $G^*-V(\cup_{j=0}^{i-1} T_j)$ ($T_0$ is null). We then
find an $M_c$-switching path $P_{uv}$ in $F$ (if exists) such that
$d_{G^*[M_c]+A}(v)$ is as small as possible. Using such switching
path to update matching $D^*$-cover $M_c$, the vertex $v$ may become the
origin of a switching path at most once.

We are now ready for a detailed description of the algorithm for
finding an optimal matching cover of a graph, which is referred to
as {\it Matching Cover Algorithm}.

\begin{algo} {\bf(Matching Cover Algorithm)}

\noindent Input: A connected graph $G$.

\noindent Output: An optimal matching cover $\tilde{M_c}$ of $G$.

\begin{enumerate}[Step 1:]
\item \label{step:1} Use Edmonds' Cardinality Matching Algorithm to find a
maximum matching $M$ of $G$, and find $D(G), A(G)$, and $C(G)$.

Construct the bipartite graph $G^*$.

Set $M':=M\cap E(G[C(G)])$, and $M^\star:=M\cap E(G^*)$.

\item \label{step:0} If $A(G)=\emptyset$,
when $D(G)=\emptyset$, set $\tilde{M_c}:=M$; when
$D(G)\neq\emptyset$, set $\tilde{M_c}:=M\cup\{e\}$, where $e$ is an
edge of $G$ incident with the vertex not covered  by $M$.

Go to Step \ref{step:10}.

\item \label{step:2} For each $D$-vertex of $G^*$ not covered by $M^\star$, choose
exactly one edge of $G^*$ which  joins this vertex to an $A$-vertex,
and let $M_0$ be the set of these edges.

Set $M_c:=M^\star\cup M_0$.

\item\label{step:3}  Let $U$ be the set of maximum centers of $M_c$ (We may suppose that $U\subseteq A_{G^*}$).

Set $F:=\emptyset$.

If the vertices in $U$ have degree at most one in $G^*[M_c]+A$, then go to Step \ref{step:9}.

\item\label{step:4}  If~ $U=\emptyset$, then go to Step
\ref{step:8}.

\item\label{step:5}  Let $u$ be a vertex in $U$.

Let $T_u$ be the component of $G^*[M_c]$ containing $u$.

\item\label{step:6}  Let $xy$ be an edge of $G^*$ with $x\in D_{T_u}$ and $y\in
A_{G^*}\setminus (A_{T_u}\cup A_F)$.

If no such edge exists, set $F:=F\cup T_u$ ($u$ is the root of
$T_u$) and $U:=U\setminus A_F$, and go to Step \ref{step:4}.

\item\label{step:7}  Set $T_u:=(T_u\cup S_y)+xy$, where $S_y$ is the component of $G^*[M_c]+A$ containing $y$, and then go to Step \ref{step:6}.

\item \label{step:8} Let $v$ be a vertex in $A_F$ such that $d_{G^*[M_c]+A}(v)$ as
small as possible.

Let $u$ be the root of the component of $F$ containing $v$.

If $d_{G^*[M_c]+A}(v)\geq d_{G^*[M_c]+A}(u)-1$, then go to Step
\ref{step:9}.

If $d_{G^*[M_c]+A}(v)\leq d_{G^*[M_c]+A}(u)-2$, do:

Let $P_{uv}$ be the path in $F$ starting at $u$ and ending at $v$.

Replace $M_c$ by the transformation of $M_c$ with respect to
$P_{uv}$.

Go to Step \ref{step:3}.

\item \label{step:9}  From a maximal matching $N$ in $G^*[M_c]$, use Edmonds' Cardinality Matching Algorithm to find a maximum matching $\tilde{M}$ of $G-V( M')$.

    For each vertex in $G[D(G)]$ which is not covered by $\tilde{M}\cup M_c$, choose  an edge in $G[D(G)]$ which covers this vertex. Let $M''$ be the set of these edges.

Set $\tilde{M_c}:=M'\cup \tilde{M}\cup M''\cup (M_c\setminus N)$.

\item \label{step:10} Return $\tilde{M_c}$.
\end{enumerate}
\end{algo}

\begin{thm}\label{thm:algorithm time}
Matching Cover Algorithm correctly determines an optimal matching
cover of a graph $G$ in $O(n^3)$ time, where $n= |V(G)|$.
\end{thm}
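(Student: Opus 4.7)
The proof splits naturally into correctness and running time, with correctness further split on whether $A(G)$ is empty. If $A(G)=\emptyset$, Step \ref{step:0} handles both sub-cases: when $D(G)=\emptyset$ the graph $G=G[C(G)]$ has a perfect matching so $mc(G)=1$ and $\tilde{M_c}=M$ is optimal; when $D(G)\neq\emptyset$, connectedness forces $G$ to be factor-critical, so $mc(G)=2$ and adding any edge incident with the unique vertex missed by $M$ yields an optimal cover.

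For the main case $A(G)\neq\emptyset$, I would first verify that Step \ref{step:2} produces a matching $D^*$-cover $M_c$ of $G^*$ with $A$-vertices as centers (every $D$-vertex is covered either by $M^\star$ or by the edge chosen into $M_0$). Then I would show by induction that each pass of Steps \ref{step:3}--\ref{step:8} preserves this property and lexicographically decreases the pair $\bigl(\Delta(G^*[M_c]+A),\,|\{\text{maximum centers}\}|\bigr)$: the transformation recipe from Section 3 gives another matching $D^*$-cover with the origin's degree reduced by one and the endpoint's degree increased by one, and the choice of $v$ in Step \ref{step:8} guarantees that $\Delta$ is either strictly reduced or held fixed with one fewer maximum center. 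Since both coordinates are non-negative integers at most $n$, the loop terminates. When it exits to Step \ref{step:9}, no $M_c$-switching path exists in $G^*$: any such path originates at a maximum center, and the near-maximal forest $F$, rooted at every maximum center and grown to saturation, would have exposed it. Lemma \ref{lem:Mc-switching path} therefore certifies that the final $M_c$ is an optimal matching $D^*$-cover.

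Step \ref{step:9} then realizes the construction promised by Corollary \ref{cor:mcG*}. The maximal matching $N$ selects one edge per nontrivial star of $G^*[M_c]$, and extending $N$ via Edmonds' algorithm in $G-V(M')$ produces a maximum matching $\tilde M$; by Lemma \ref{lem:G-E-th} the union $M'\cup\tilde M$ is a maximum matching of $G$ covering $C(G)\cup A(G)\cup V(N)$. The residue $M_c\setminus N$ is a bipartite graph of maximum degree $md(G^*)-1$, which decomposes into $md(G^*)-1$ matchings by K\"onig's edge-coloring theorem; attaching $M''$ (supplying the edges inside $G[D(G)]$ that cover the vertices of $D(G)\setminus D^*$ left exposed) contributes one further matching. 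Combined with $M'\cup\tilde M$ this gives an $mc(G)$-matching cover of $G$ by Lemma \ref{lem:mcG*}.

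For the running time, the Edmonds' calls in Steps \ref{step:1} and \ref{step:9} each cost $O(n^3)$, and every pass of Steps \ref{step:3}--\ref{step:8} grows an alternating forest in $G^*$ using $O(m)=O(n^2)$ work. The main obstacle is bounding the iteration count tightly enough to hit $O(n^3)$ overall: the two monovariants only give $O(n^2)$ iterations immediately, yielding the naive bound $O(n^2m)$. To obtain $O(n^3)$ I would invoke the minimum-$d_{G^*[M_c]+A}(v)$ choice in Step \ref{step:8}, which (as informally argued before the algorithm is stated) ensures that a vertex can play the role of switching-path origin at most once after it has played the role of an endpoint. Amortizing this observation across all vertices caps the total number of iterations at $O(n)$; combined with the $O(m)$ per-iteration cost, this delivers the claimed $O(n^3)$ overall bound, and making the amortization rigorous is the central technical step of the proof.
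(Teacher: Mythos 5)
Your overall architecture matches the paper's: the case split on $A(G)$, the invariant that each transformation preserves a matching $D^*$-cover with $A$-vertices as centers while lexicographically decreasing the pair $(\Delta(G^*[M_c]+A),\ \mbox{number of maximum centers})$, the appeal to Lemma \ref{lem:Mc-switching path} at termination, and the reassembly in Step \ref{step:9} via Corollary \ref{cor:mcG*}. However, the argument is incomplete at exactly the point you yourself flag as ``the central technical step'': the $O(n)$ bound on the number of iterations is asserted, not proved. Your lexicographic monovariant only yields $O(n^2)$ iterations, hence $O(n^2m)$ overall, and the claim that a vertex can be a switching-path origin at most once after having been an endpoint is precisely what requires proof --- it does not follow from anything you have established. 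The paper proves it as follows: writing $k^*=md(G^*)$, every switching path $P_{uv}$ used while $\Delta(G^*[M_c])>k^*$ satisfies $d_{G^*[M_c]+A}(v)<k^*$, because the near-maximality of $F$ forces $D_F\subseteq N_{G^*[M_c']}(A_F)$ for any optimal cover $M_c'$ with $A$-centers, whence $d_{G^*[M_c]+A}(v)<\lceil |D_F|/|A_F|\rceil\leq\lceil |N_{G^*[M_c']}(A_F)|/|A_F|\rceil\leq\Delta(G^*[M_c'])=k^*$. Consequently endpoints stay at degree at most $k^*$ and never become origins before optimality is reached; a degree-sum count then bounds the pre-optimal iterations by $|D_{G^*}|$, and after optimality each iteration strictly decreases the number of maximum centers, adding at most $|A_{G^*}|-1$ more. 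Without this averaging argument your amortization does not close, and the stated running time is unjustified.

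A secondary slip: in Step \ref{step:9} you decompose the residue $M_c\setminus N$ into $k^*-1$ matchings and then say $M''$ ``contributes one further matching,'' which together with $M'\cup\tilde M$ would give $k^*+1$ matchings rather than $k^*$. Since $V(M'')\subseteq D(G)\setminus D^*$ is disjoint from $V(M_c\setminus N)\subseteq V(G^*)$, the set $M''$ must be merged into one of the matchings coming from the residue (as the paper does), not counted separately; and when $k^*\leq 1$ the residue is empty, so the cover is $M'\cup\tilde M$ together with $M''$, matching the $mc(G)=2$ case of Lemma \ref{lem:mcG*}.
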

\begin{proof}
We first consider the case when $A(G)=\emptyset$ (Step
\ref{step:0}). If $D(G)=\emptyset$, then $G=G[C(G)]$ and so $M$ is a
perfect matching of $G$, which is an optimal matching cover of $G$.
If $D(G)\neq\emptyset$, we have $G=G[D(G)]$, which is factor-critical. So $M$ is a near-perfect matching of $G$. This implies
that the union of $M$ and an edge $e$ of $G$ incident with the
vertex not covered by $M$ is an optimal matching cover of $G$.

We next consider the case when $A(G)\neq\emptyset$.  By Lemma
\ref{lem:G-E-th}, $G$ has no perfect matchings and  $M'$ and $M^\star$ obtained in Step \ref{step:1} are a perfect matching of $G[C(G)]$ and  a matching of $G^*$, respectively. Note that $M_0$
obtained in Step \ref{step:2} covers the $D$-vertices of  $G^*$ which are not covered by $M^\star$. So $M^\star\cup M_0$ is a matching $D^*$-cover of
$G^*$ with its centers being $A$-vertices. The first iteration
of Matching Cover Algorithm  begins with the matching $D^*$-cover
$M_c=M^\star\cup M_0$, which is minimal. If $d_{G^*[M_c]+A}(u)\leq 1$ for a vertex $u\in U$ (Step \ref{step:3}), then $M_c\setminus N=\emptyset$, where $N$ is a maximal matching in $G^*[M_c]$ (Step \ref{step:9}). By Lemma \ref{lem:G-E-th}, the union of the perfect matching $M'$  of $G[C(G)]$ and the  maximum matching $\tilde{M}$ of $G-V(M')$ is a maximum matching of $G$, which covers $A(G)$, and for each component of $G[D(G)]$, there is at most one vertex not covered by $M'\cup \tilde{M}$. Therefore $M''$ obtained in Step \ref{step:9} is a matching, which covers $V(G)\setminus V(M'\cup \tilde{M})$. Consequently,  $(M'\cup \tilde{M})\cup M''$, a union of two  matchings of $G$, is an optimal matching cover of $G$.

Now return to the case when $d_{G^*[M_c]+A}(u)\geq 2$ for a vertex $u\in U$ (Step \ref{step:3}). In each iteration of Step \ref{step:6},  since  $xy$ is an edge of $G^*$ such that $x\in D_{T_u}$ and $y\in A_{G^*}\setminus
(A_{T_u}\cup A_F)$, $xy$ is  not in $M_c$, and so the graph $T_u\cup
S_y+xy$ obtained in Step \ref{step:7} is an $M_c$-alternating tree
rooted at a maximum center $u$ of $M_c$. If no such edge $xy$
exists, then $T_u$ is a maximal $M_c$-alternating tree in $G^*-V(F)$
and $F$ is replaced by $F\cup T_u$. When all maximum center of $M_c$
are scanned (from Step \ref{step:4} to Step \ref{step:7}), $F$ is a
near-maximal $M_c$-alternating forest.

In Step \ref{step:8}, for the vertex $v\in A_F$, if
$d_{G^*[M_c]+A}(v)\leq d_{G^*[M_c]+A}(u)-2$, then the path $P_{uv}$ is
an $M_c$-switching path. This implies that the updated $M_c$ at
the end of Step \ref{step:8}, which is the transformation of $M_c$
with respect to $P_{uv}$, is also a matching $D^*$-cover of $G^*$ with
$A$-vertices being centers. In Step \ref{step:9}, since $\tilde{M}$ is a
maximum matching of $G-V( M')$ obtained from the matching $N$, $\tilde{M}$ covers $V(N)$. Recall that $M'$ covers $C(G)$, $\tilde{M}\cup M'$ is a maximum matching of $G$, $\tilde{M}$ covers $A(G)$, $M_c$ covers $D^*$, and the vertices in $G[D(G)]$  not covered by $\tilde{M}\cup M_c$ is covered by the matching $M''$ of  $G[D(G)]$ obtained in Step 10. 
Therefore, when the algorithm
terminates, if  $M_c=N$, then $(M'\cup \tilde{M})\cup M''$ is a matching cover of $G$, which has two matchings $M'\cup \tilde{M}$ and $M''$ ; if  $M_c\supset N$, then the union of $M''$ and a maximal matching of $G[M_c\setminus N]$ is a matching of $G$,  and so
$(M'\cup \tilde{M})\cup M''\cup (M_c\setminus N)$ is a matching cover of $G$, which has the same number of maximal matchings as $M_c$ . By Lemma
\ref{lem:mcG*}, it suffices to show that the final $M_c$ is an
optimal matching $D^*$-cover of $G^*$.

Now consider the final $M_c$ and $F$ in the algorithm. Note that
each component of $G[M_c]$ is a star and  $F$ is a near-maximal
$M_c$-alternating forest. We have the following facts.

\noindent $\bullet$  All the maximum centers of $M_c$ belong to
$A_F$.

\noindent $\bullet$  $F$ is the union of pairwise disjoint
$M_c$-alternating trees each of which roots at a maximum
\\ \hspace*{0.25cm} center of $M_c$.

\noindent $\bullet$  $G^*$ has no edge $xy$ with $x\in D_F$ and
$y\in A_{G^*}\setminus A_F$.

\noindent Hence, if $G^*$ has an $M_c$-switching path $P$, then
$V(P)\subseteq V(F)$. It follows that  if $F$ has no $M_c$-switching
path, so does $G^*$.

We claim that $\Delta(G^*[M_c])- d_{G^*[M_c]+A}(v)\leq 1$ for every
vertex $v\in A_F$. Suppose to the contrary that there is a vertex
$v\in A_F$ such that $\Delta(G^*[M_c])- d_{G^*[M_c]+A}(v)\geq 2$.
Since $v\in A_{T_u}$ for some component $T_u$ of $F$, the path in
$T_u$ connecting $u$ and $v$ is an $M_c$-switching path, and then
$M_c$ should be updated in Step \ref{step:8} of the algorithm, a
contradiction. The claim follows.

From the above claim, we see that $G^*$ has no $M_c$-switching
paths. By Lemma \ref{lem:Mc-switching path}, $M_c$ is an optimal
matching $D^*$-cover of $G^*$.

We now consider the running time of the algorithm. Using Edmonds'
Cardinality Matching Algorithm in Step \ref{step:1} and  Step
\ref{step:9} takes $O(n^3)$ time. Finding the matching $M''$ in Step \ref{step:9} takes  $O(|E(G^*)|)$ time. Dealing with the case when
$A(G)=\emptyset$ in Step \ref{step:0} takes $O(n)$ time. Finding
the initial matching $D^*$-cover $M_c$ of $G^*$ in Step \ref{step:2}
takes $O(|E(G^*)|)$ time. For each given matching $D^*$-cover $M_c$ (not
including the final one) of $G^*$, it takes $O(|E(G^*)|)$ time to
find a near-maximal $M_c$-alternating forest $F$ and an
$M_c$-switching path of $G^*$ (if exists). To complete the proof,
we need only  show that there are $O(n)$ iterations to update
matching $D^*$-cover $M_c$. Write $k^*= md(G^*)$.

For each iteration other than the last one, consider the current
matching $D^*$-cover $M_c$ of $G^*$ and the near-maximal $M_c$-alternating
forest $F$. Since $M_c$ is updated later, in Step \ref{step:8} there
is an $M_c$-switching path $P_{uv}$ in $F$ such that $v\in A_F$ and the degree of
$v$ in $G^*[M_c]+A$ is minimum. We claim that
$d_{G^*[M_c]+A}(v)< k^*$. To show this, let $M_c'$ be an optimal
matching $D^*$-cover of $G^*$ with $A$-vertices being centers. Recall
that, for the near-maximal $M_c$-alternating forest $F$, $G^*$ has
no edge $xy$ with $x\in D_F$ and $y\in A_{G^*}\setminus A_F$. Then
we have $D_F\subseteq N_{G^*[M_c']}(A_F)$, and so, $|D_F|\leq
|N_{G^*[M_c']}(A_F)|$. Since $d_{G^*[M_c]}(u)\geq
d_{G^*[M_c]+A}(v)+2$, we have $\lceil\frac{|D_F|}{|A_F|}\rceil >
d_{G^*[M_c]+A}(v)$. Thus
$$ d_{G^*[M_c]+A}(v)<\lceil\frac{|D_F|}{|A_F|}\rceil\leq
\lceil\frac{|N_{G^*[M_c']}(A_F)|}{|A_F|}\rceil\leq \Delta
(G^*[M_c'])=k^*.$$ The claim follows.

Note that, when the algorithm terminates, $G^*$ has no
$M_c$-switching path. But an optimal matching $D^*$-cover of $G^*$ may
appear before the termination of the algorithm. This is because,
for an optimal matching $D^*$-cover $M_c$ of $G^*$, there may still
exist an $M_c$-switching path $P_{uv}$ in the near-maximal
$M_c$-alternating forest $F$ such that the transformation of $M_c$
with respect to $P_{uv}$ has fewer maximum centers than $M_c$.

Let $\mathcal{P}$ be the set of switching paths $P_{uv}$ which are
used in the algorithm to update $M_c$ before $M_c$ is optimal
(that is, $\Delta(G^*[M_c])>k^*$). By the above claim, for each
$P_{uv}$ in $\mathcal{P}$, we have $d_{G^*[M_c]}(u)> k^*$ and
$d_{G^*[M_c]+A}(v)< k^*$. Therefore, for the new matching $D^*$-cover
$M_c'$ which is the transformation of $M_c$ with respect to
$P_{uv}$, the degree of $v$ in $G^*[M_c']$ is increased by 1, and
so $d_{G^*[M_c']+A}(v)\leq k^*$. This implies that before the first
optimal matching $D^*$-cover of $G^*$ appears, the vertex $v$ never
becomes the origin of a switching path. Write $V'=\{u: \mbox{ there is a
switching path $P_{uv}$ in } \mathcal{P}\}$. Before the first
optimal matching $D^*$-cover of $G^*$ appears, we always have
$\sum_{u\in V'} d_{G^*[M_c]+A}(u) < |D_{G^*}|$.  So
$|\mathcal{P}|\leq |D_{G^*}|$, which implies that there are at
most $|D_{G^*}|$ iterations to update $M_c$ which is not optimal.

Whence $M_c$ is an optimal matching $D^*$-cover of $G^*$ in the algorithm,
the maximum degree of $G^*[M_c]$ will not change in the later
iterations. Because there are at most $|A_{G^*}|$ maximum centers in
an optimal matching $D^*$-cover of $G^*$, at most $|A_{G^*}|-1$ iterations
are used to update $M_c$ which is optimal. So the total number of
times to update $M_c$ is at most $|V(G^*)|$. The proof is complete.
\end{proof}

\noindent{\bf Remark:} \ For a graph $G$ with $n$ vertices and $m$ edges,
 Goldberg and Karzanov \cite{Golberg95} presented a faster algorithm to
find a maximum matching of $G$ in $O(\sqrt{n}m\log_n
\frac{n^2}{m})$ time. Furthermore, if we have a maximum matching,
the Edmonds-Gallai decomposition can be found in $O(n^2)$ time
(see \cite{Schrijver03}, page 425). Therefore, after replace
Edmonds' Cardinality Matching Algorithm in Algorithm 1 by Goldberg
and Karzanov's matching algorithm, Step \ref{step:1} and Step
\ref{step:9} takes $O(nm)$ time. Recall that Step \ref{step:0} and
Step \ref{step:2} takes $O(m)$ time, and from Step \ref{step:3} to
Step \ref{step:8}, there are $O(n)$ iterations each of which takes
$O(m)$ time. Consequently, we have an algorithm for finding an
optimal matching cover of a graph $G$ with running time $O(nm)$.

\end{document}